\newcommand{\txtand}{{\mathop{\text{~and~}}}}
\newcommand{\Acal}{{\mathcal{A}}}
\newcommand{\Bcal}{{\mathcal{B}}}
\newcommand{\Ccal}{{\mathcal{C}}}
\newcommand{\Fcal}{{\mathcal{F}}}
\newcommand{\Ical}{{\mathcal{I}}}
\newcommand{\Ocal}{{\mathcal{O}}}
\newcommand{\contract}{.}
\newcommand{\BS}{\backslash}
\newcommand{\rk}{\mathrm{rk}}
\newcommand{\cl}{\mathrm{cl}}
\newcommand{\Z}{\mathbb{Z}}
\newcommand{\N}{\mathbb{N}}
\renewcommand{\phi}{\varphi}
\newcommand{\maparrow}{\longrightarrow}
\newcommand{\BSET}[1]{{\BS\left\{ #1 \right\}}}
\newcommand{\SET}[1]{{\left\{ #1 \right\}}}
\newcommand{\COMMENT}[1]{}
\newcommand{\XXXCUTXXX}[1]{}
\newcommand{\ROMANENUM}{\renewcommand{\theenumi}{(\roman{enumi})}\renewcommand{\labelenumi}{\theenumi}}
\def\titlerunning#1{\gdef\titrun{#1}}
\def\author#1{\gdef\autrun{\def\and{\unskip, }#1}\gdef\@author{#1}}
\def\address#1{{\def\and{\\\hspace*{18pt}}\renewcommand{\thefootnote}{}%
\footnote {#1}}%
\markboth{\autrun}{\titrun}}
\def\email#1{e-mail: #1}
\def\subjclass#1{{\renewcommand{\thefootnote}{}%
\footnote{\emph{Mathematics Subject Classification (2010):} #1}}}
\def\keywords#1{\par\medskip
\noindent\textbf{Keywords.} #1}
\newtheorem{theorem}{Theorem}[section]
\newtheorem{corollary}[theorem]{Corollary}
\newtheorem{proposition}[theorem]{Proposition}
\newtheorem{lemma}[theorem]{Lemma}
\theoremstyle{definition}
\newtheorem{definition}[theorem]{Definition}
\newtheorem{remark}[theorem]{Remark}
\newtheorem{example}[theorem]{Example}
\begin{document}


\titlerunning{Lattice Path Matroids are 3-Colorable}

\title{Lattice Path Matroids are 3-Colorable}

\author{Immanuel Albrecht
\and 
Winfried Hochstättler}

\date{\today}

\maketitle

\address{I.~Albrecht: 
FernUniversität in Hagen, Fakultät für Mathematik und Informatik, Lehrgebiet für Diskrete Mathematik und Optimierung, D-58084~Hagen, Germany;
\email{Immanuel.Albrecht@fernuni-hagen.de}
\and
W.~Hochstättler: FernUniversität in Hagen, Fakultät für Mathematik und Informatik, Lehrgebiet für Diskrete Mathematik und Optimierung, D-58084~Hagen, Germany; \email{Winfried.Hochstaettler@fernuni-hagen.de}}

\subjclass{05C15, 05B35, 52C40}


\begin{abstract}
We show that every lattice path matroid of rank at least two has a quite simple coline,
also known as a positive
coline. Therefore every orientation of a lattice path matroid is $3$-colorable with respect to the chromatic number
of oriented matroids introduced by J.~Nešetřil, R.~Nickel, and W.~Hochstättler.

\keywords{colorings, lattice path matroids, transversal matroids, oriented matroids}
\end{abstract}


Recently, in order to verify the generalization of Hadwiger's
Conjecture to oriented matroids for the case of 3-colorability, Goddyn
et.\ al.\ \cite{GoHoNe15} introduced the class of generalized series
parallel (GSP) matroids and asked whether it coincides with the class
of oriented matroids without $M(K_4)$-minor. Furthermore, they showed
that a minor closed class $\mathcal{C}$ of oriented matroids is a
subclass of the GSP-matroids, if every simple matroid in $\mathcal{C}$
contains a flat of codimension 2, i.\ e.\ a coline, which is contained
in more flats of codimension 1, i.\ e.\ copoints, with only one extra
element, than in larger copoints. We call such a coline quite simple.
They conjectured that every simple gammoid of rank at least 2
has a quite simple coline. Gammoids may be characterized as the smallest class of matroids
that is closed under minors and under duality, and which contains all
transversal matroids -- a class of matroids that is not closed under minors nor duals.
Bicircular matroids
form a minor closed subclass of the transversal matroids, and Goddyn et.\
al.\ \cite{GoHoNe15} verified the existence of a quite simple coline in
every simple bicircular matroid of rank at least 2.

Another minor closed subclass of the transversal matroids is the class of the lattice path matroids
\cite{Bonin2006701}. In this work we show that every simple lattice path matroids of rank at least 2 has a quite simple coline,
which implies that orientations of lattice path matroids are GSP, and therefore we obtain
the 3-colorability of every orientation of a lattice path matroid.

\section{Preliminaries}

In this work, we consider \emph{matroids} to be pairs $M=(E,\Ical)$ where $E$ is a finite set and $\Ical$ is a system of
independent subsets of $E$ subject to the usual axioms (\cite{Ox11}, Sec.~1.1). Furthermore, \emph{oriented matroids} are
considered triples $\Ocal = (E,\Ccal,\Ccal^\ast)$ where $E$ is a finite set, $\Ccal$ is a family of signed circuits and $\Ccal^\ast$
is a family of signed cocircuits subject to the axioms of oriented matroids (\cite{BlVSWZ99}, Ch.~3). Every oriented matroid $\Ocal$
has a uniquely determined underlying matroid defined on the ground set $E$, which we shall denote by $M(\Ocal)$.\footnote{The underlying matroid is the only notion from oriented matroids that is needed for the comprehension of this work.}

\begin{definition}[\cite{GoHoNe15}, Definition~4]
 Let $M=(E,\Ical)$ be
a matroid. A flat $X\in\Fcal(M)$ is called \emph{coline of $ M$},
 if $\rk_M(X)=\rk_M(E)-2$.
 A flat $Y\in\Fcal(M)$ is called
\emph{copoint of $ M$ on $ X$},
 if $X\subseteq Y$ and
$\rk_M(Y)=\rk_M(E)-1$. 
If further $\left| Y\backslash X \right|=1$,
we say that $Y$ is a \emph{simple copoint on $ X$}. 
If otherwise $\left| Y\backslash X \right|>1$, we
say that $Y$ is a \emph{multiple copoint on $ X$}\footnote{In \cite{GoHoNe15} multiple copoints are called {\em fat copoints}.\index{fat copoint}}.
 A \emph{quite simple coline}\footnote{In \cite{GoHoNe15} quite simple colines are called {\em positive colines}.\index{positive coline}} is a coline $X\in \Fcal(M)$,
 such that there are more simple copoints on $X$ than there are multiple copoints on $X$.
\end{definition}

 The following definitions are basically those found in J.E.~Bonin and A.~deMier's paper {\em Lattice path matroids: Structural properties} \cite{Bonin2006701}.
\begin{definition}
Let $n\in\mathbb{N}$. A \emph{lattice path} of length $n$ is a tuple\label{n:latticePath}
$(p_{i})_{i=1}^{n}\in\{\mathrm{N},\mathrm{E}\}^{n}$. 
We say
that the \emph{$ i$-th step} of $(p_{i})_{i=1}^{n}$ is towards the North if $p_{i}=\mathrm{N}$,
and towards the East if $p_{i}=\mathrm{E}$.
\end{definition}

\begin{definition}
Let $n\in\mathbb{N}$, and let $p = (p_{i})_{i=1}^{n}$ and $q = (q_{i})_{i=1}^{n}$
be lattice paths of length $n$. We say that $p$
is \emph{south of $ q$}
if for all $k\in\SET{1,2,\ldots,n}$, 
\[
\left|\left\{ i\in \N \BSET {0} \vphantom{A^A}~\middle|~ i\leq k \txtand p_{i}=\mathrm{N}\right\} \right|\leq\left|\left\{ i\in \N \BSET {0} \vphantom{A^A}~\middle|~ i\leq k \txtand q_{i}=\mathrm{N}\right\} \right|.
\]
We say that $p$ and $q$ have \emph{common endpoints}, if
 $$\left|\left\{ i\in \N \BSET {0} \vphantom{A^A}~\middle|~ i\leq n \txtand p_{i}=\mathrm{N}\right\} \right| = \left|\left\{ i\in \N \BSET {0} \vphantom{A^A}~\middle|~ i\leq n \txtand q_{i}=\mathrm{N}\right\} \right|$$ holds. 
 We say that the \emph{lattice path $ p$
is south of  $ q$ with common endpoints},
 if $p$ and $q$ have common endpoints and $p$ is south of $q$.
 In this case, we write $p \preceq q$.\label{n:neverabove}
\end{definition}

\begin{definition}
Let $n\in \N$, and  let $p,q\in \SET{\mathrm{E},\mathrm{N}}^n$ be lattice paths such
that $p\preceq q$. We define the set
of \emph{lattice paths between
 $ p$ and $ q$}
to be\label{n:LPbetweenPQ}
\[
\mathrm{P}\left[p,q\right]=\left\{ r \in\{\mathrm{N},\mathrm{E}\}^{n}
\vphantom{A^A}~\middle|~
p \preceq r \preceq q\right\} . \qedhere
\]
\end{definition}

\begin{definition}\label{def:LPmatroid}
A matroid $M=(E,\Ical)$ is called \emph{strong lattice path matroid}, if
its ground set has the property 
$E = \SET{1,2,\ldots,\left| E \right|}$ and if
there are lattice paths $p,q\in \SET{\mathrm{E},\mathrm{N}}^{\left| E \right|}$
with $p\preceq q$,
such that $M = M[p,q],$
where $M[p,q]$ denotes the transversal matroid  presented by
the family
$\Acal_{[p,q]}=(A_i)_{i=1}^{\rk_M(E)} \subseteq E$
with
\[
A_i = \SET{j\in E~\middle|~\exists (r_j)_{j=1}^{\left| E \right|}\in \mathrm{P}[p,q]\colon\,
 r_j = \mathrm{N}\txtand \left| \SET{k\in E\mid k\leq j,\,r_k=\mathrm{N} }\right| = i },
\]
i.e. each $A_i$ consists of those $j\in E$, such that there is a 
lattice path $r$ between $p$ and $q$ such that the $j$-th step of $r$ is towards the North for the $i$-th time in total.
Furthermore,
a matroid $M=(E,\Ical)$ is called \emph{lattice path matroid}, 
if there is a bijection $\phi \colon E\maparrow \SET{1,2,\ldots,\left| E \right|}$ 
such that $\phi[M] = \left(\phi[E],\SET{\phi[X]\vphantom{A^A}~\middle|~ X\in\Ical}\right)$ is a strong lattice path matroid.
\end{definition}


\begin{example}\label{ex:A}
(Fig.~\ref{fig:A}a) 
Let us consider the two lattice paths $p=(\mathrm{E},\mathrm{E},\mathrm{N},\mathrm{E},\mathrm{N},\mathrm{N})$
and $q=(\mathrm{N},\mathrm{N},\mathrm{E},\mathrm{N},\mathrm{E},\mathrm{E})$. 
We have $p\preceq q$ and the strong lattice path matroid $M[p,q]$ is the transversal matroid $M(\Acal)$ presented
by the family  $\Acal=(A_i)_{i=1}^3$ of subsets of $\SET{1,2,\ldots,6}$ where $A_{1}=\{1,2,3\}$, $A_{2}=\{2,3,4,5\}$,
and 
$A_{3}=\{4,5,6\}$.%
\end{example}

\begin{theorem}[\cite{Bonin2006701}, Theorem~2.1]
\label{LPMthm:P} Let $p$, $q$ be lattice
paths of length $n$, such that $p\preceq q$.
Let $\Bcal \subseteq 2^{\SET{1,2,\ldots,n}}$ consist of the bases of the
strong lattice path matroid $M=M[p,q]$ on the ground set $E=\SET{1,2,\ldots,n}$.
Let
$$\phi \colon \mathrm{P}[p,q] \maparrow \Bcal,\quad (r_i)_{i=1}^n \mapsto \SET{j\in \N ~\middle|~ 1\leq j\leq n,\,r_j = \mathrm{N}} .$$
Then $\phi$ is a bijection between 
the family of lattice paths $\mathrm{P}[p,q]$ between $p$ and $q$ and the family of bases of $M$.
\end{theorem}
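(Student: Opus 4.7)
My plan starts by observing that $\phi$ is tautologically injective, since any lattice path $(r_j)_{j=1}^n$ is reconstructed from its set of N-positions. So it remains to show that $\phi$ takes values in $\Bcal$ and is surjective. For well-definedness I fix $r \in \mathrm{P}[p,q]$ and set $B = \phi(r)$: the common-endpoints condition on $p,q,r$ forces $|B| = \rk_M(E)$, and sending $i$ to the position of the $i$-th N-step of $r$ defines a system of distinct representatives $\pi$ of $\Acal_{[p,q]}$ with image $B$ — the path $r$ is itself the witness demanded by Definition~\ref{def:LPmatroid} that $\pi(i) \in A_i$. Hence $B$ is a transversal of $\Acal_{[p,q]}$ of full cardinality and thus a basis of $M[p,q]$.

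For surjectivity, let $B = \{b_1 < b_2 < \cdots < b_k\}$ be a basis with $k = \rk_M(E)$, and let $r_B$ denote the unique lattice path of length $n$ whose N-positions are precisely the elements of $B$. Writing $p_i^{\mathrm{N}}$ and $q_i^{\mathrm{N}}$ for the positions of the $i$-th N in $p$ and in $q$, my goal is to establish $q_i^{\mathrm{N}} \leq b_i \leq p_i^{\mathrm{N}}$ for all $i$, which unpacks to $p \preceq r_B \preceq q$ and hence to $r_B \in \mathrm{P}[p,q]$. A preliminary observation falls out of the definitions: whenever $r' \in \mathrm{P}[p,q]$ has its $i$-th N at position $j$, comparing prefix N-counts in $p \preceq r' \preceq q$ at position $j$ yields $q_i^{\mathrm{N}} \leq j \leq p_i^{\mathrm{N}}$, so $A_i \subseteq [q_i^{\mathrm{N}}, p_i^{\mathrm{N}}]$; moreover both sequences $(p_i^{\mathrm{N}})_i$ and $(q_i^{\mathrm{N}})_i$ are strictly increasing in $i$.

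The heart of the argument — and the only step I expect to be non-routine — is a Hall-type pigeonhole showing that the sorted enumeration of $B$ already lines up with $(A_i)$ position by position. Starting from any SDR $\pi$ of $\Acal_{[p,q]}$ with image $B$, I claim $b_i \in [q_i^{\mathrm{N}}, p_i^{\mathrm{N}}]$ for every $i$. Indeed, if $b_i > p_i^{\mathrm{N}}$, then by monotonicity the $k - i + 1$ elements $b_i, b_{i+1}, \ldots, b_k$ all exceed each of $p_1^{\mathrm{N}}, \ldots, p_i^{\mathrm{N}}$, so they lie outside $A_1 \cup \cdots \cup A_i$, forcing $\pi^{-1}$ to squeeze them into the $k - i$ indices $\{i+1, \ldots, k\}$, a contradiction; a symmetric count rules out $b_i < q_i^{\mathrm{N}}$. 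Re-reading the resulting inequalities $q_i^{\mathrm{N}} \leq b_i \leq p_i^{\mathrm{N}}$ as prefix N-count comparisons yields $p \preceq r_B \preceq q$, so $r_B \in \mathrm{P}[p,q]$ with $\phi(r_B) = B$, which completes the argument.
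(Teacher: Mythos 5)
Your proof is correct, and on well-definedness and injectivity it coincides with the paper's: the sorted N-positions of $r$ give a system of distinct representatives of $\Acal_{[p,q]}$ (the paper's map $\iota_r$), and a path of fixed length is determined by its N-set. Where you genuinely diverge is surjectivity. The paper disposes of it with ``clear from Definition~\ref{def:LPmatroid}'' --- implicitly leaning on the cited result of Bonin and de~Mier --- even though this is the only non-routine direction: a basis $B$ is a maximal partial transversal whose representatives could a priori be witnessed by different paths for different $A_i$, so one must show that the increasing enumeration $b_1<\cdots<b_k$ of $B$ itself matches $A_1,\ldots,A_k$ in order and that the resulting path stays between $p$ and $q$. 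Your pigeonhole argument does exactly this: from $A_i\subseteq[q_i^{\mathrm{N}},p_i^{\mathrm{N}}]$ and the monotonicity of the endpoint sequences, any violation $b_i>p_i^{\mathrm{N}}$ (resp.\ $b_i<q_i^{\mathrm{N}}$) forces $k-i+1$ elements of $B$ into $k-i$ indices (resp.\ $i$ elements into $i-1$ indices), contradicting the existence of an SDR; the inequalities $q_i^{\mathrm{N}}\leq b_i\leq p_i^{\mathrm{N}}$ then translate directly into $p\preceq r_B\preceq q$. The paper's approach buys brevity by outsourcing the hard step to the reference; yours buys a self-contained proof, at the cost of the one genuinely combinatorial lemma. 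Both are valid; yours is the more complete account of why the theorem is true.
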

\begin{proof}
	Clearly, $\phi$ is well-defined: let $r=(r_i)_{i=1}^n\in \mathrm{P}[p,q]$, and let $m = \rk_M(E)$,
	then there are $j_1 < j_2 < \ldots < j_m$ such that $r_i = \mathrm{N}$ if and only if $i\in \SET{j_1,j_2,\ldots,j_m}$.
	Thus the map $$\iota_r\colon \phi(r) \maparrow \SET{1,2,\ldots,m},$$ where
	$\iota_r(i) = k$ for $k$ such that $i = j_k$, witnesses that the set $\phi(r) \subseteq \SET{1,2,\ldots,n}$ is indeed a transversal of $\Acal_{[p,q]}$,
	and therefore a base of $M[p,q]$. It is clear from Definition~\ref{def:LPmatroid} that $\phi$ is surjective.
	It is obvious that if we consider only lattice paths of a fixed given length $n$, then the indexes of the steps towards the North
	 uniquely determine such a lattice path. Thus $\phi$ is also injective.
\end{proof}

\begin{theorem}[\cite{Bonin2006701}, Theorem~3.1]\label{thm:LPMclosedUnderStuff}
The class of lattice path matroids
is closed under minors, duals and direct sums.\end{theorem}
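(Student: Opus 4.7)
The plan is to verify the three closure properties in turn, using Theorem~\ref{LPMthm:P} throughout as the bridge between lattice paths and bases.

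Direct sums are easiest: given $M_i = M[p^{(i)}, q^{(i)}]$ on disjoint ground sets ($i = 1, 2$), I would concatenate the paths to form $p = p^{(1)} p^{(2)}$ and $q = q^{(1)} q^{(2)}$ of total length $n_1 + n_2$. The relation $p \preceq q$ is inherited coordinatewise, and Theorem~\ref{LPMthm:P} identifies $\mathrm{P}[p, q]$ with $\mathrm{P}[p^{(1)}, q^{(1)}] \times \mathrm{P}[p^{(2)}, q^{(2)}]$, so bases of $M[p, q]$ decompose as disjoint unions of bases of the factors, yielding $M[p, q] = M_1 \oplus M_2$ up to relabeling.

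For duality, I would define $\tilde p, \tilde q$ by swapping every N-step for an E-step. Swapping interchanges N-counts and E-counts in every prefix, so $p \preceq q$ becomes $\tilde q \preceq \tilde p$. By Theorem~\ref{LPMthm:P}, the bases of $M[\tilde q, \tilde p]$ on the same ground set $\{1, \ldots, n\}$ are the N-positions of paths $\tilde r \in \mathrm{P}[\tilde q, \tilde p]$, equivalently the E-positions of the corresponding $r \in \mathrm{P}[p, q]$, which are the complements in $\{1, \ldots, n\}$ of the bases of $M[p, q]$. This is exactly the basis family of $M[p, q]^*$.

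For minors, duality reduces the task to single-element deletion. Given $k \in \{1, \ldots, n\}$: if $p_k = q_k$, then $k$ is a loop or coloop and one simply drops the $k$-th coordinate from both $p$ and $q$. Otherwise $p_k \neq q_k$, and by a coordinate-reversal symmetry I may assume $p_k = \E$, $q_k = \mathrm{N}$. The plan in this case is first to modify $q$ by swapping $q_k = \mathrm{N}$ with the first E-step of $q$ appearing after position $k$, obtaining $q^{(k)}$ with $q^{(k)}_k = \E$; then to delete the $k$-th coordinate from both $p$ and $q^{(k)}$, producing paths $p', q'$ of length $n - 1$ with $p' \preceq q'$. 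Theorem~\ref{LPMthm:P} together with the shift relabeling $\{1, \ldots, n\} \setminus \{k\} \to \{1, \ldots, n - 1\}$ then identifies $M[p', q']$ with $M \setminus k$.

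I expect the main obstacle to be the equality $\{r \in \mathrm{P}[p, q] : r_k = \E\} = \{r \in \mathrm{P}[p, q^{(k)}] : r_k = \E\}$ underpinning the deletion step. The inclusion $\supseteq$ is immediate from $q^{(k)} \preceq q$; the reverse inclusion requires checking $r \preceq q^{(k)}$ for every $r \in \mathrm{P}[p, q]$ with $r_k = \E$, which reduces to an N-count estimate on the maximal run of N-steps of $q$ starting at position $k$, exploiting both the hypothesis $r_k = \E$ and the constraint $r \preceq q$.
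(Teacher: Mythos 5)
The paper offers no proof of this statement; it is quoted from Bonin and de~Mier, so your proposal has to be judged on its own merits. Your treatments of direct sums and of duality are correct and are essentially the standard arguments: concatenation of the bounding paths splits $\mathrm{P}[p,q]$ as a product because the common-endpoint condition pins down the $\mathrm{N}$-count at position $n_1$, and the $\mathrm{N}/\mathrm{E}$ swap reverses the ``south of'' relation and complements bases. The reduction of minors to single-element deletion via duality is also fine, and in the case $p_k=\mathrm{E}$, $q_k=\mathrm{N}$ your key identity $\SET{r\in\mathrm{P}[p,q] : r_k=\mathrm{E}}=\SET{r\in\mathrm{P}[p,q^{(k)}] : r_k=\mathrm{E}}$ is true and the prefix-count estimate you sketch does prove it (note that the required later $\mathrm{E}$-step of $q$ always exists in this case, since otherwise $p\preceq q$ with common endpoints would fail).

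The gap is in your first deletion case. The claim ``if $p_k=q_k$ then $k$ is a loop or a coloop'' is false, and the recipe of simply dropping the $k$-th coordinate fails exactly there. Take $p=(\mathrm{E},\mathrm{N},\mathrm{N})$ and $q=(\mathrm{N},\mathrm{N},\mathrm{E})$; then $p\preceq q$, the bases of $M[p,q]$ are $\SET{1,2},\SET{1,3},\SET{2,3}$ (so $M[p,q]\cong U_{2,3}$), and $k=2$ satisfies $p_2=q_2=\mathrm{N}$ while being neither a loop nor a coloop. Here $M\setminus 2$ has rank $2$, but dropping the second coordinate gives $p'=(\mathrm{E},\mathrm{N})$, $q'=(\mathrm{N},\mathrm{E})$ and $M[p',q']\cong U_{1,2}$, which has rank $1$; in fact when $p_k=q_k=\mathrm{N}$ the coordinate-dropping operation computes the contraction $M/k$, not the deletion. (When $p_k=q_k=\mathrm{E}$ your recipe does compute $M\setminus k$, whether or not $k$ is a loop, by the insert/delete-an-$\mathrm{E}$ bijection.) To repair the argument, in the case $p_k=q_k=\mathrm{N}$ with $k$ not a coloop you must first move the $\mathrm{N}$-step of $p$ at position $k$ back to the last $\mathrm{E}$-position of $p$ before $k$ --- the mirror image of your modification of $q$ --- and apply your $q$-modification as well, before deleting the coordinate; alternatively, one can argue directly on the transversal presentation, observing that each $A_i$ is an interval $[a_i,b_i]$ with $a_1<a_2<\cdots$ and $b_1<b_2<\cdots$, and that removing an element from every interval (discarding any that become empty) again yields such a system.
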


\section{The Western Coline}

\begin{figure}[h]
\begin{center}
\begin{tabular}{lll}
\textit{a)}&\textit{b)}&\textit{c)}\\
\begin{tikzpicture}[scale=.85]
\node[inner sep=0pt,circle,minimum size=4pt,fill] at (0,0) {};

\node[inner sep=0pt,circle,minimum size=4pt,fill] at (1,0) {};

\node[inner sep=0pt,circle,minimum size=4pt,fill] at (2,0) {};

\node[inner sep=0pt,circle,minimum size=4pt,fill] at (0,1) {};

\node[inner sep=0pt,circle,minimum size=4pt,fill] at (1,1) {};

\node[inner sep=0pt,circle,minimum size=4pt,fill] at (2,1) {};

\node[inner sep=0pt,circle,minimum size=4pt,fill] at (3,1) {};
\node[inner sep=0pt,circle,minimum size=4pt,fill] at (0,2) {};

\node[inner sep=0pt,circle,minimum size=4pt,fill] at (1,2) {};

\node[inner sep=0pt,circle,minimum size=4pt,fill] at (2,2) {};

\node[inner sep=0pt,circle,minimum size=4pt,fill] at (3,2) {};
\node[inner sep=0pt,circle,minimum size=4pt,fill] at (1,3) {};

\node[inner sep=0pt,circle,minimum size=4pt,fill] at (2,3) {};

\node[inner sep=0pt,circle,minimum size=4pt,fill] at (3,3) {};

\draw[thin] (1,0) -- (1,3);
\draw[thin] (2,0) -- (2,3);
\draw[thin] (0,1) -- (3,1);
\draw[thin] (0,2) -- (3,2);
\draw[very thick] (0,0) -- (2,0) -- (2,1) -- (3,1) -- (3,3);
\node at (3.5,.5) {$p$};
\node at (-.5,.5) {$q$};

\draw[very thick] (0,0) -- (0,2) -- (1,2) -- (1,3) -- (3,3);
\begin{scope}[shift={(-.1,-.15)}]
\node at (.3,.5) {$1$};
\node at (1.3,.5) {$2$};
\node at (2.3,.5) {$3$};
\node at (.3,1.5) {$2$};
\node at (1.3,1.5) {$3$};
\node at (2.3,1.5) {$4$};
\node at (3.3,1.5) {$5$};
\node at (1.3,2.5) {$4$};
\node at (2.3,2.5) {$5$};
\node at (3.3,2.5) {$6$};
\end{scope}
\end{tikzpicture}
& \includegraphics[scale=.75]{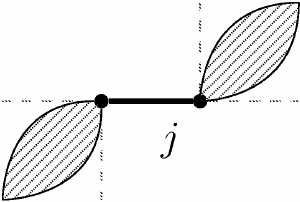} & \includegraphics[scale=.75]{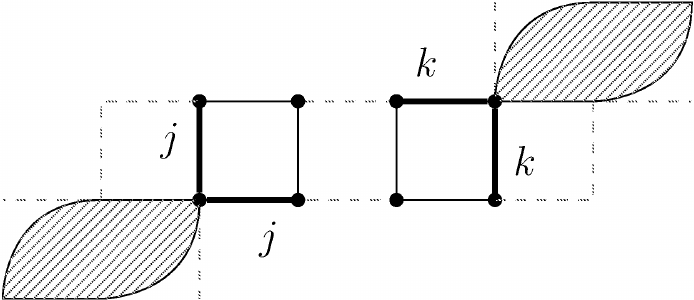}
\end{tabular}
\end{center}
\caption{\label{fig:A}\textit{a)} Lattice paths for Ex.~\ref{ex:A}, \textit{b,c)} situation in Prop.\ref{LPMprop:X} \textit{(ii)} and \textit{(iii)}.}
\end{figure}

\begin{proposition}
\label{LPMprop:X}Let $p=(p_{i})_{i=1}^{n}$, $q=(q_{i})_{i=1}^{n}$
be lattice paths of length $n$ such that $p\preceq q$. Let $j\in E=\SET{1,2,\ldots,n}$ and $M=M[p,q]$. Then
\begin{enumerate} \ROMANENUM
\item $\rk_M\left( \SET{1,2,\ldots,j} \right)=\left|\left\{ i\in \SET{1,2,\ldots,j} \mid q_{i}=\mathrm{N}\right\} \right|$.
\item 
The element $j$ is a loop in $M$ if and only if 
\[
\left|\left\{ i\in \SET{1,2,\ldots,j-1} \vphantom{A^A}~\middle|~ p_{i}=\mathrm{N}\right\} \right|=\left|\left\{ i\in \SET{1,2,\ldots,j}\vphantom{A^A}~\middle|~ q_{i}=\mathrm{N}\right\} \right|,
\]
i.e. the $j$-th step is forced to go towards East for all $r\in \mathrm{P}[p,q]$ (Fig.~\ref{fig:A}b).
\item For all $k\in E$ with $j<k$, $j$ and $k$ are parallel edges in $M$
if and only if 
\begin{eqnarray*}
\left|\left\{ i\in\SET{1,2,\ldots,j-1}\vphantom{A^A}~\middle|~p_{i} = \mathrm{N}\right\} \right| & = & \left|\left\{ i\in\SET{1,2,\ldots,k-1}\vphantom{A^A}~\middle|~ p_{i}=\mathrm{N}\right\} \right|
\\ & =&
\left|\left\{ i\in\SET{1,2,\ldots,j}\vphantom{A^A}~\middle|~q_{i} = \mathrm{N}\right\} \right| -1
\\& = & \left|\left\{ i\in\SET{1,2,\ldots,k}\vphantom{A^A}~\middle|~q_{i}=\mathrm{N}\right\} \right| -1, \end{eqnarray*}
i.e. the $j$-th and $k$-th steps of any $r\in \mathrm{P}[p,q]$ are in a common corridor towards
the East that is one step wide towards the North (Fig.~\ref{fig:A}c).
\end{enumerate}
\end{proposition}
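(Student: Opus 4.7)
The plan is to prove the three parts in order, all leveraging the bijection of Theorem~\ref{LPMthm:P} between bases of $M[p,q]$ and lattice paths in $\mathrm{P}[p,q]$. For brevity I abbreviate $h_s(k)=|\{i\le k:s_i=\mathrm{N}\}|$ for any lattice path $s$. Note that $p\preceq q$ is equivalent to $h_p(k)\le h_q(k)$ for all $k$, and that a lattice path $r$ of length $n$ lies in $\mathrm{P}[p,q]$ iff $h_p(k)\le h_r(k)\le h_q(k)$ for every $k$ together with $h_r(n)=h_q(n)$.

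Part (i) is essentially immediate: $\rk_M(\{1,\ldots,j\})=\max_B|B\cap\{1,\ldots,j\}|$ taken over bases $B$ of $M$, which via Theorem~\ref{LPMthm:P} becomes $\max_{r\in\mathrm{P}[p,q]} h_r(j)$. Since $r\preceq q$ forces $h_r(j)\le h_q(j)$ and the path $r=q$ itself lies in $\mathrm{P}[p,q]$ and attains this bound, the rank equals $h_q(j)$.

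For part (ii), $j$ is a loop iff no basis of $M$ contains $j$, equivalently no $r\in\mathrm{P}[p,q]$ satisfies $r_j=\mathrm{N}$. If such an $r$ exists then $h_r(j)=h_r(j-1)+1$ together with $h_p(j-1)\le h_r(j-1)$ and $h_r(j)\le h_q(j)$ forces $h_p(j-1)<h_q(j)$. Conversely, if $h_p(j-1)<h_q(j)$, I construct a witness $r\in\mathrm{P}[p,q]$ with $r_j=\mathrm{N}$ by letting $r$ agree with $p$ on steps $1,\ldots,j-1$, placing a North at step $j$, and then extending: the height at step $j$ is $h_p(j-1)+1$, which lies in $[h_p(j),h_q(j)]$, so a continuation to the common endpoint inside the strip $[h_p(k),h_q(k)]$ exists. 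Together with the automatic $h_p(j-1)\le h_q(j-1)\le h_q(j)$, this produces exactly the stated equivalence $h_p(j-1)=h_q(j)$. The extension argument is the main subtlety of this part and uses the lattice structure of $\mathrm{P}[p,q]$.

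For part (iii), assume $j<k$ and that both elements are non-loops, for otherwise $\{j,k\}$ is not a circuit. Applying Hall's marriage theorem to the presentation $\Acal_{[p,q]}$, the set $\{j,k\}$ fails to have a system of distinct representatives iff there is a single index $i_0$ with $\{i:j\in A_i\}=\{i:k\in A_i\}=\{i_0\}$, i.e., $j$ and $k$ each lie in exactly one set $A_i$ and it is the same one. The same window analysis as in part (ii) shows $j\in A_i$ iff $i\in[h_p(j-1)+1,h_q(j)]$, so $j$ lies in a unique $A_i$ exactly when $h_p(j-1)+1=h_q(j)$; imposing the analogous equality for $k$ and the coincidence of the common indices $h_q(j)=h_q(k)$ (and hence $h_p(j-1)=h_p(k-1)$) yields precisely the four equalities in the statement.
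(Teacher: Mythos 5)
Your proof is correct, and for parts \emph{(i)} and \emph{(ii)} it follows essentially the same route as the paper: reduce everything to lattice paths via the path--basis bijection of Theorem~\ref{LPMthm:P}, note that $q$ itself realizes the maximum of $h_r(j)$ for \emph{(i)}, and characterize loops by the non-existence of a path with $r_j=\mathrm{N}$ (your explicit extension argument for the converse is in fact more detailed than the paper's, which simply asserts the equivalence). For part \emph{(iii)} you take a genuinely different route. The paper argues directly with paths: for one direction it observes that any $r$ with $r_j=r_k=\mathrm{N}$ must leave the strip (dropping below $p$ at $j-1$ or rising above $q$ at $k$), and for the other it uses a $\preceq$-minimal path through $j$ together with the fact that $k$ is not a loop. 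You instead pass to the presentation $\Acal_{[p,q]}$, describe the set of indices $i$ with $j\in A_i$ as the interval $\left[h_p(j-1)+1,\,h_q(j)\right]$, and apply Hall's condition to the two-element set $\SET{j,k}$. This buys a cleaner logical skeleton --- parallelism becomes ``the two intervals coincide and are singletons,'' which is literally the four stated equalities, and the ``if'' direction comes for free --- at the cost of having to justify the interval description of $A_i$. That is a slightly stronger reachability claim than the one you established in \emph{(ii)}, since you must realize every height in the window rather than only the minimal one $h_p(j-1)+1$; it is routine (follow $q$ until the desired height is reached, go East to column $j-1$, step North, then continue along the pointwise maximum with $p$), and the paper's own proof of \emph{(iii)} is no more detailed at the analogous points, so I would not call this a gap. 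Do spell that construction out, though, and note explicitly that your four equalities force $j$ and $k$ to be non-loops by \emph{(ii)}, which is what legitimizes restricting attention to non-loops at the start of your argument.
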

\begin{proof}
For every $r\in \mathrm{P}[p,q]$, we have $r \preceq q$,
therefore $r$ is south of $q$, thus for
 all $k\in E$,
$\left| \SET{j\in \SET{1,2,\ldots,k} \vphantom{A^A}~\middle|~r_k = \mathrm{N}} \right| \leq \left| \SET{j\in \SET{1,2,\ldots,k}\vphantom{A^A}~\middle|~ q_k = \mathrm{N}} \right|$.
Consequently, 
$\left\{ i\in \SET{1,2,\ldots,j} \vphantom{A^A}~\middle|~ q_{i}=\mathrm{N}\right\}$ is a maximal independent subset of $\SET{1,2,\ldots,j}$
and so statement {\em (i)} holds.
%
An element  $j\in E$ is a loop in $M$, if and only if $\mathrm{rk}_M(\{j\})=0$,
which is the case if and only $\SET{j}$ is not independent in $M$.
This is the case if and only if for all bases $B$ of $M$, $j\notin B$ holds, 
because every independent set is a subset of a base.
The latter holds
 if and only if for all $(r_{i})_{i=1}^{n}\in\mathrm{P}[p,q]$ the $j$-th step is towards the East, i.e. $r_{j}=\mathrm{E}$.
 This, in turn, is the case
if and only if $\left|\left\{ i\in\SET{1,2,\ldots,j-1}\vphantom{A^A}~\middle|~p_{i}=\mathrm{N}\right\} \right|=\left|\left\{ i\in\SET{1,2,\ldots,j}\vphantom{A^A}~\middle|~q_{i}=\mathrm{N}\right\} \right|$.
Thus statement {\em (ii)} holds, too.
Let $j,k\in E$ with $j< k$. It is easy to see that if $j$ and $k$ are in a common corridor, then every lattice path $r=(r_i)_{i=1}^{n}$ of length $n$
with $r_j = r_k = \mathrm{N}$ cannot be between $p$ and $q$, i.e. $p\preceq r \preceq q$ cannot hold: a lattice path $r$ with $r_j = r_k = \mathrm{N}$
is either below $p$ at $j-1$ or above $q$ at $k$. 
Thus $\SET{j,k}$ cannot be independent in $M$. By {\em (i)}, neither $j$ nor $k$ can be a loop in $M$, thus $j$ and $k$ must be parallel edges in $M$.
Conversely, let $j < k$ be parallel edges in $M$. 
Then $j$ is not a loop in $M$, so there is a path $r^1 = (r_{i}^{1})_{i=1}^{n}\in\mathrm{P}[p,q]$
with $r_{j}^{1}=\mathrm{N}$ which is minimal with regard to $\preceq$,
and then
\[
\left|\left\{ i\in \SET{1,2,\ldots,j-1}\vphantom{A^A}~\middle|~ r_{i}^{1}=\mathrm{N}\right\} \right|=\left|\left\{ i\in\SET{1,2,\ldots,j-1}\vphantom{A^A}~\middle|~p_{i}=\mathrm{N}\right\} \right|.
\]
Since $j$ and $k$ are parallel edges, $\SET{j,k}\not\subseteq B$ for all bases $B$ of $M$.
Therefore there is no $r=(r_{i})_{i=1}^{n}\in \mathrm{P}[p,q]$ such that $r_i = r_k = \mathrm{N}$.
This yields the equation
\begin{align*}
\left|\left\{ i\in\SET{1,2,\ldots,k}\vphantom{A^A}~\middle|~q_{i} = \mathrm{N}\right\} \right|  \,\,=\,\, & \left| \left\{ i\in\SET{1,2,\ldots,j}\vphantom{A^A}~\middle|~ r_{i}^{1}=\mathrm{N}\right\}\right|
\\   = \,\,& \left|\left\{ i\in\SET{1,2,\ldots,j-1}\mid r_{i}^{1}=\mathrm{N}\right\} \right|+1.
\end{align*}
Since $k$ is not a loop in $M$, it follows that \[
\left|\left\{ i\in\SET{1,2,\ldots,j-1}\vphantom{A^A}~\middle|~ p_{i}=\mathrm{N}\right\} \right|=\left|\left\{ i\in\SET{1,2,\ldots,j}\vphantom{A^A}~\middle|~ q_{i}=\mathrm{N}\right\} \right|-1.
\] Thus {\em (iii)} holds.
\end{proof}

\begin{lemma}
\label{LPMlem:A}Let $p=(p_{i})_{i=1}^{n}$ and $q=(q_{i})_{i=1}^{n}$ be
lattice paths of length $n$, such that $p\preceq q$, and such that $M=M[p,q]$ is
a strong lattice path matroid on $E=\SET{1,2,\ldots,n}$ which has no loops.
Let $j\in E$ such that $q_j = \mathrm{N}$. Then 
\[
\SET{1,2,\ldots,j-1}=\cl_M\left(\SET{1,2,\ldots,j-1}\right).
\]
Furthermore, for all $k\in E$ with $k \geq j$, 
\[
\rk_M\left( \SET{1,2,\ldots,j-1}\cup\{k\} \right)= \rk_M\left( \SET{1,2,\ldots,j-1} \right)+1.
\]
 \end{lemma}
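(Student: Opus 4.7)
The plan is to prove both assertions simultaneously by showing that for every $k\geq j$,
\[
\rk_M\!\left(\{1,\ldots,j-1\}\cup\{k\}\right) \;=\; r+1,
\]
where $r:=\rk_M(\{1,\ldots,j-1\})$. The closure claim then follows: no $k\geq j$ can lie in $\cl_M(\{1,\ldots,j-1\})$, while all $k<j$ trivially do, forcing $\cl_M(\{1,\ldots,j-1\})=\{1,\ldots,j-1\}$.

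Abbreviate $N_q(\ell) := \left|\{i\in\{1,\ldots,\ell\}\mid q_i=\mathrm{N}\}\right|$, and analogously $N_p$. Proposition~\ref{LPMprop:X}\,(i) gives $r=N_q(j-1)$, and monotonicity of rank yields the upper bound $\rk_M(\{1,\ldots,j-1\}\cup\{k\})\leq r+1$. For the matching lower bound, let $I:=\{i<j\mid q_i=\mathrm{N}\}$, so $|I|=r$ and $I\subseteq\phi(q)$ is independent. I will exhibit a path $r^{\ast}\in\mathrm{P}[p,q]$ whose $\mathrm{N}$-positions contain $I\cup\{k\}$; by Theorem~\ref{LPMthm:P}, $\phi(r^{\ast})$ is then a base containing $I\cup\{k\}$, so the latter is independent and has size $r+1$.

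If $q_k=\mathrm{N}$ (this subsumes $k=j$), take $r^{\ast}:=q$. Otherwise, modify $q$ by a swap: for a carefully chosen $j'\in\{j,\ldots,k-1\}$ with $q_{j'}=\mathrm{N}$, set $r^{\ast}_{j'}:=\mathrm{E}$, $r^{\ast}_k:=\mathrm{N}$, and $r^{\ast}_i:=q_i$ otherwise. The swap preserves the total $\mathrm{N}$-count, so $r^{\ast}$ is a lattice path ending at the same endpoint as $q$; moreover $N_{r^{\ast}}$ differs from $N_q$ only on $\{j',\ldots,k-1\}$, where it drops by $1$, so $r^{\ast}\preceq q$. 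The only remaining condition is $p\preceq r^{\ast}$, which reduces to $N_p(i)<N_q(i)$ for all $i\in\{j',\ldots,k-1\}$.

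Selecting $j'$ is the main technical step. If $N_p(i)<N_q(i)$ already holds throughout $\{j,\ldots,k-1\}$, take $j':=j$. Otherwise let $i^{\sharp}:=\max\{i\in\{j,\ldots,k-1\}\mid N_p(i)=N_q(i)\}$ and take $j':=i^{\sharp}+1$. First, $i^{\sharp}\leq k-2$: if $i^{\sharp}=k-1$ then $N_p(k-1)=N_q(k-1)$ combined with loop-freeness of $k$ (Proposition~\ref{LPMprop:X}\,(ii) yields $N_p(k-1)<N_q(k)$) would force $q_k=\mathrm{N}$, contradicting $q_k=\mathrm{E}$. Second, loop-freeness of $i^{\sharp}+1$ gives $N_p(i^{\sharp})<N_q(i^{\sharp}+1)$, which together with $N_p(i^{\sharp})=N_q(i^{\sharp})$ forces $q_{i^{\sharp}+1}=\mathrm{N}$; hence $j'\in\{j+1,\ldots,k-1\}$ is indeed an $\mathrm{N}$-position of $q$. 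Maximality of $i^{\sharp}$ gives $N_p(i)<N_q(i)$ on $\{j',\ldots,k-1\}$, completing the construction. The crux is that the naive $j'=j$ may fail precisely at positions where $p$ and $q$ meet in $\{j,\ldots,k-1\}$; loop-freeness of $M$ is exactly what guarantees an $\mathrm{N}$-step of $q$ immediately after every such meeting, allowing the swap to be pushed past each obstruction.
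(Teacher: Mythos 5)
Your proof is correct. It follows the same overall strategy as the paper --- compute $\rk_M(\{1,\ldots,j-1\})=|\{i<j\mid q_i=\mathrm{N}\}|$ from Proposition~\ref{LPMprop:X}\,(i), exhibit one lattice path in $\mathrm{P}[p,q]$ whose North-positions contain $\{i<j\mid q_i=\mathrm{N}\}\cup\{k\}$, and finish with the unit-increase bound --- but the construction of the witnessing path is genuinely different. The paper takes an arbitrary path $r$ through $k$ (existing by loop-freeness) and splices: follow $q$ for $j-1$ steps, go East until meeting $r$, then continue along $r$. You instead perturb $q$ itself by a single $\mathrm{N}\leftrightarrow\mathrm{E}$ transposition, moving one North step from a position $j'\in\{j,\ldots,k-1\}$ to position $k$, and use loop-freeness pointwise (via Proposition~\ref{LPMprop:X}\,(ii)) to choose $j'$ so that the lowered stretch stays weakly above $p$; your case analysis on $i^{\sharp}$ checks out. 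This buys you something concrete: your argument verifies $p\preceq r^{\ast}\preceq q$ explicitly and does not depend on which path through $k$ one starts from, whereas the paper's splice tacitly assumes the meeting with $r$ happens strictly before step $k$ (i.e.\ that $r$ has already reached height $N_q(j-1)$ by step $k-1$), which can fail for a badly chosen $r$ and must be repaired either by picking $r$ as the $\preceq$-largest path through $k$ or by an argument much like yours. Two cosmetic remarks: the upper bound $\rk_M(\{1,\ldots,j-1\}\cup\{k\})\leq r+1$ is the unit-increase property of rank rather than monotonicity; and Proposition~\ref{LPMprop:X}\,(ii) literally gives $N_p(k-1)\neq N_q(k)$, so the strict inequality you invoke also uses $N_p(k-1)\leq N_q(k-1)\leq N_q(k)$ from $p\preceq q$ --- you clearly rely on this, it just deserves a word.
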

 

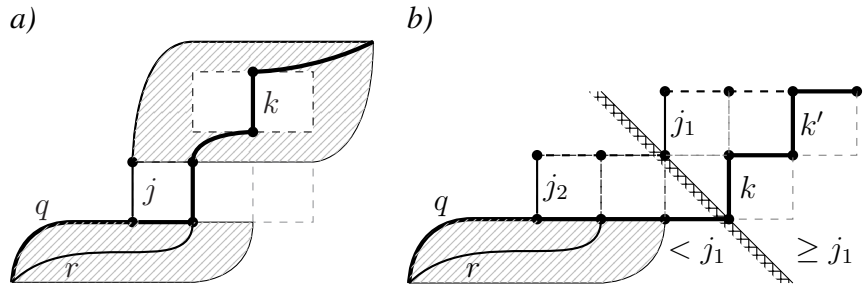
\begin{figure}[b]
\caption{The lattice paths $s$ in the proof of  \textit{a)} Lem.~\ref{LPMlem:A} and \textit{b)} Thm.~\ref{thm:WesternColine}.}
\begin{center}
\begin{tabular}{ll}
\textit{a)} & \textit{b)} \\
\begin{tikzpicture}[scale=0.8]\selectcolormodel{cmyk}
\node[inner sep=0pt,circle,minimum size=4pt,fill] (j) at (0,0) {};
\node[inner sep=0pt,circle,minimum size=4pt,fill] at (1,0) {};
\node[inner sep=0pt,circle,minimum size=4pt,fill] at (0,1) {};
\node[inner sep=0pt,circle,minimum size=4pt,fill] at (1,1) {};
\begin{scope}[shift={(1,2)}]
\draw[thick]   (-1,-1) .. controls (-1,-1) and (-1,1) .. (0,1) -- (3,1);
\draw  (0,1) -- (3,1) .. controls (3,-1) and (2,-1) .. (2,-1)  -- (-1,-1) .. controls (-1,-1) and (-1,1) .. (0,1);
\fill[pattern color = lightgray, pattern = north east lines] (0,1) -- (3,1) .. controls (3,-1) and (2,-1) .. (2,-1)  -- (-1,-1) .. controls (-1,-1) and (-1,1) .. (0,1);
\fill[color=white] (0,-.5) -- (2,-.5) -- (2, .5) -- (0, .5) -- (0,-.5);
\draw[dashed] (0,-.5) -- (2,-.5) -- (2, .5) -- (0, .5) -- (0,-.5);
\draw[ultra thick] (0,-1) .. controls (0,-0.5) and (1,-0.5).. (1,-0.5) -- (1,.5) .. controls (1,.5) and (2,0.5) .. (3,1);
\node at (1.3,0) {$k$}; 
\end{scope}

\node[inner sep=0pt,circle,minimum size=4pt,fill] at (2,1.5) {};
\node[inner sep=0pt,circle,minimum size=4pt,fill] at (2,2.5) {};


\begin{scope}[shift={(-1,0)}]
\draw[ultra thick]   (-1,-1) .. controls (-1,-1) and (-1,0) .. (0,0) -- (1,0);
\draw  (0,0) -- (3,0) .. controls (3,-1) and (2,-1) .. (2,-1)  -- (-1,-1) .. controls (-1,-1) and (-1,0) .. (0,0);
\fill[pattern color = lightgray, pattern = north east lines] (0,0) -- (3,0) .. controls (3,-1) and (2,-1) .. (2,-1)  -- (-1,-1) .. controls (-1,-1) and (-1,0) .. (0,0);
\draw[thick] (-1,-1) .. controls (0,0) and (2,-1).. (2,0);
\end{scope}

\draw[thick] (j)--(0,1);
\draw[ultra thick] (j)--(1,0);

\draw[ultra thick] (1,0)--(1,1);


\draw[dashed] (0,0)--(0,1)--(1,1)--(1,0)--(0,0);
\begin{scope}[shift={(2,0)}]
\draw[dashed,color=gray] (0,0)--(0,1)--(1,1)--(1,0)--(0,0);
\end{scope}

\node at (.3,.5) {$j$}; 
\node at (-1.5,.2) {$q$};
\node at (-1,-.8) {$r$};
\end{tikzpicture}
&
\begin{tikzpicture}[scale=.85]
\node[inner sep=0pt,circle,minimum size=4pt,fill] (j) at (0,0) {};
\node[inner sep=0pt,circle,minimum size=4pt,fill] at (1,0) {};
\node[inner sep=0pt,circle,minimum size=4pt,fill] at (0,1) {};
\node[inner sep=0pt,circle,minimum size=4pt,fill] at (1,1) {};
\node[inner sep=0pt,circle,minimum size=4pt,fill] at (2,1) {};
\node[inner sep=0pt,circle,minimum size=4pt,fill] at (3,0) {};
\node[inner sep=0pt,circle,minimum size=4pt,fill] at (3,1) {};
\node[inner sep=0pt,circle,minimum size=4pt,fill] at (4,1) {};

\node[inner sep=0pt,circle,minimum size=4pt,fill] at (2,0) {};
\node[inner sep=0pt,circle,minimum size=4pt,fill] at (2,2) {};
\node[inner sep=0pt,circle,minimum size=4pt,fill] at (5,2) {};
\node[inner sep=0pt,circle,minimum size=4pt,fill] at (4,2) {};
\node[inner sep=0pt,circle,minimum size=4pt,fill] at (3,2) {};


\begin{scope}[shift={(-1,0)}]
\draw[ultra thick]   (-1,-1) .. controls (-1,-1) and (-1,0) .. (0,0) -- (1,0);
\draw  (0,0) -- (3,0) .. controls (3,-1) and (2,-1) .. (2,-1)  -- (-1,-1) .. controls (-1,-1) and (-1,0) .. (0,0);
\fill[pattern color = lightgray, pattern = north east lines] (0,0) -- (3,0) .. controls (3,-1) and (2,-1) .. (2,-1)  -- (-1,-1) .. controls (-1,-1) and (-1,0) .. (0,0);
\draw[thick] (-1,-1) .. controls (0,0) and (2,-1).. (2,0);
\end{scope}

\draw[thick] (j)--(0,1);

\begin{scope}[shift={(2,0)}]
\draw[dashed,color=gray] (0,0)--(0,1)--(1,1)--(1,0)--(0,0);
\end{scope}
\begin{scope}[shift={(2,1)}]
\draw[dashed,color=gray] (0,0)--(0,1)--(1,1)--(1,0)--(0,0);
\end{scope}
\begin{scope}[shift={(3,0)}]
\draw[dashed,color=gray] (0,0)--(0,1)--(1,1)--(1,0)--(0,0);
\end{scope}
\begin{scope}[shift={(3,1)}]
\draw[dashed,color=gray] (0,0)--(0,1)--(1,1)--(1,0)--(0,0);
\end{scope}
\begin{scope}[shift={(4,1)}]
\draw[dashed,color=gray] (0,0)--(0,1)--(1,1)--(1,0)--(0,0);
\end{scope}

\draw[dashed] (0,0)--(0,1)--(1,1)--(1,0)--(0,0);
\begin{scope}[shift={(1,0)}]
\draw[dashed] (0,0)--(0,1)--(1,1)--(1,0)--(0,0);
\end{scope}

\node at (.3,.5) {$j_2$}; 
\node at (2.3,1.5) {$j_1$}; 

\node at (4.3,1.5) {$k'$}; 

\node at (3.3,.5) {$k$};
\draw [thick,dashed] (2,1) -- (0,1);
\draw [thick] (2,1)--(2,2);
\draw [thick,dashed] (2,2) --(5,2);

\draw [ultra thick] (0,0) -- (3,0) -- (3,1) -- (4,1) -- (4,2) -- (5,2);

\node at (-1.5,.2) {$q$};
\node at (-1,-.8) {$r$};
\draw (1,2) -- (4,-1);
\fill[pattern color=black,pattern = grid] (1,2) -- (4,-1) -- (3.8,-1) -- (.8,2);
\node at (2.5,-.5) {$<j_1$};
\node at (4.5,-.5) {$\ge j_1$};
\end{tikzpicture}
\end{tabular}
\end{center}
\label{fig:B}
\end{figure}

\begin{proof}
By Proposition~\ref{LPMprop:X}~{\em (i)}, we have
 $$\mathrm{rk}_M(\SET{1,2,\ldots,j-1})=\left|\left\{ i\in\SET{1,2,\ldots,j-1}\vphantom{A^A}~\middle|~ q_{i}=\mathrm{N}\right\} \right|.$$
Now fix some $k\in E$ with $k\geq j$. Since $M$ has no loop, 
 there is a base $B$ of $M$ with $k\in B$ and thus a lattice path $r=(r_{i})_{i=1}^{n}\in\mathrm{P}[p,q]$
with $r_{k}=\mathrm{N}$ (Theorem~\ref{LPMthm:P}).
We can construct a lattice path $s=(s_{i})_{i=1}^{n}\in\mathrm{P}[p,q]$
that follows $q$ for the first $j-1$ steps, then goes towards the
East until it meets $r$, and then goes on as $r$ does (Fig.~\ref{fig:B}a). 
The base $B_s = \SET{i\in E\mid s_i = \mathrm{N}}$
 that corresponds to the constructed path yields
\begin{align*}
\mathrm{rk}_M(\SET{1,2,\ldots,j-1}\cup\{k\}) & \geq \left|\left(\SET{1,2,\ldots,j-1}\vphantom{A^A}\cup\{k\}\right)\cap B_s\right|
\\ & = 1+\left|\left\{ i\in\SET{1,2,\ldots,j-1}\vphantom{A^A}~\middle|~ q_{i}=\mathrm{N}\right\} \right|
\\ & = 1+\mathrm{rk}_M(\SET{1,2,\ldots,j-1}).
\end{align*}
Since $\rk_M$ is unit increasing, 
adding a single element to a set can increase the rank by at most one,
 thus the inequality in the above formula is indeed an equality.
This implies that $k\notin\mathrm{cl}_M(\SET{1,2,\ldots, j-1})$. 
Since $k$ was arbitrarily chosen with $k \geq j$,
we obtain
 $\SET{1,2,\ldots, j-1}=\mathrm{cl}_M(\SET{1,2,\ldots, j-1})$.\end{proof}

\begin{theorem}\label{thm:WesternColine}
\label{LPMthm:B}Let $p=(p_{i})_{i=1}^{n}$, $q=(q_{i})_{i=1}^{n}$ be
lattice paths, such that $p\preceq q$ and such that $M=M[p,q]=(E,\Ical)$ 
has no loop and no parallel edges, and $\rk_M(E) \geq 2$.
Let $N_q = \SET{i\in E\mid q_i = \mathrm{N}}$, $j_{1}=\max N_q$, and $j_{2}=\max N_q\BSET{j_1}.$
Then the following holds
\begin{enumerate}\ROMANENUM
\item $\SET{1,2,\ldots, j_{2}-1}$ is a coline of $M$, we shall call it the \emph{Western coline of $ M$}.
\item $\SET{1,2,\ldots,j_{1}-1}$ is a copoint on the Western coline of $M$, which is a multiple copoint whenever $j_{1}-j_{2}\ge 2$. 
\item For every $k \geq j_1$ the set $\SET{1,2,\ldots,j_{2}-1}\cup\{k\}$ is a simple copoint on the Western coline of $M$.
\end{enumerate}
\end{theorem}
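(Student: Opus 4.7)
For parts (i) and (ii), I would compute ranks using Proposition~\ref{LPMprop:X}~(i) and deduce closedness from Lemma~\ref{LPMlem:A}. Since $j_1$ and $j_2$ are the two rightmost N-positions of $q$, setting $m := \rk_M(E)$ yields $\rk_M(\{1,\ldots,j_2-1\}) = m - 2$ and $\rk_M(\{1,\ldots,j_1-1\}) = m - 1$. Applying Lemma~\ref{LPMlem:A} at $j = j_2$ and at $j = j_1$ (both indices have $q_j = \mathrm{N}$, and $M$ is loopless) shows both sets are closed, so $\{1,\ldots,j_2-1\}$ is a coline, proving (i). For (ii), the containment $\{1,\ldots,j_2-1\} \subseteq \{1,\ldots,j_1-1\}$ with difference $\{j_2,\ldots,j_1-1\}$ of cardinality $j_1 - j_2$ finishes the argument.

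For (iii), Lemma~\ref{LPMlem:A} applied at $j = j_2$ to the element $k$ (which satisfies $k \geq j_1 \geq j_2$) gives $\rk_M(\{1,\ldots,j_2-1\}\cup\{k\}) = m - 1$. Since this set exceeds the Western coline by exactly one element, simpleness is automatic once we have closedness. Closedness reduces to showing $\rk_M(\{1,\ldots,j_2-1\}\cup\{k,\ell\}) = m$ for each $\ell \in \{j_2,\ldots,n\} \setminus \{k\}$. I would split into the subcases $\ell < j_1$ and $\ell \geq j_1$.

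For $\ell \in \{j_2,\ldots,j_1-1\}$, the closed copoint $\{1,\ldots,j_1-1\}$ (from (ii)) contains $\{1,\ldots,j_2-1\} \cup \{\ell\}$ but not $k$ (since $k \geq j_1$). Hence $k \notin \cl_M(\{1,\ldots,j_2-1\}\cup\{\ell\})$, and the matroid exchange property immediately yields $\ell \notin \cl_M(\{1,\ldots,j_2-1\}\cup\{k\})$.

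The main obstacle is the remaining case $\ell \geq j_1$ with $\ell \neq k$. My plan is to exhibit, via Theorem~\ref{LPMthm:P}, a base inside $\{1,\ldots,j_2-1\} \cup \{k,\ell\}$ by constructing a lattice path $s \in \mathrm{P}[p,q]$ whose $m$ N-steps sit exactly there: let $s_i = q_i$ for $i < j_2$, then put $s_i = \mathrm{N}$ precisely for $i \in \{\ell,k\}$ and $s_i = \mathrm{E}$ on the rest of $\{j_2,\ldots,n\}$. Verifying $s \preceq q$ is a routine prefix count. The delicate step is $p \preceq s$ on the prefixes where $s$ carries only $m-2$ or $m-1$ N-steps. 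This requires that $\max(\ell,k)$ does not exceed the position of $p$'s last N-step (which holds because positions strictly beyond that would be loops) and that $\min(\ell,k)$ does not exceed the position of $p$'s second-to-last N-step; if the latter failed, any path in $\mathrm{P}[p,q]$ containing both $\ell$ and $k$ as N-steps would be forced to accumulate more than $m$ N-steps by position $\max(\ell,k)$, so no such path would exist, making $\{\ell,k\}$ have rank at most $1$ and hence parallel, contradicting the no-parallel-edges assumption.
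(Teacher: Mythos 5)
Your proposal is correct and follows the same overall skeleton as the paper's proof: ranks and flatness of $\SET{1,\ldots,j_2-1}$ and $\SET{1,\ldots,j_1-1}$ come from Proposition~\ref{LPMprop:X}~(i) and Lemma~\ref{LPMlem:A}, and part (iii) is settled by exhibiting a lattice path in $\mathrm{P}[p,q]$ whose North-steps lie in $\SET{1,\ldots,j_2-1}\cup\SET{k,\ell}$. The execution of (iii) differs in two respects, both to your credit. First, you explicitly dispose of the case $\ell\in\SET{j_2,\ldots,j_1-1}$ via the copoint $\SET{1,\ldots,j_1-1}$ and the MacLane--Steinitz exchange property; the paper's proof only verifies $k'\notin\cl(X_k)$ for $k'\geq j_1$, so your argument closes a case the published proof passes over silently. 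Second, where the paper starts from a path $r$ through both $k$ and $k'$ (guaranteed by simplicity) and splices it onto the prefix of $q$, you build $s$ directly and verify $p\preceq s$ by locating the last two North-steps of $p$; your two auxiliary claims (that $\max(k,\ell)$ cannot pass the last North-step of $p$ because such positions would be loops, and that $\min(k,\ell)$ cannot pass the second-to-last one because otherwise $\SET{k,\ell}$ would be dependent and hence a parallel pair) are exactly the content of Proposition~\ref{LPMprop:X}~(ii) and (iii), so the verification goes through. The paper's splicing is marginally shorter since southness of the spliced path against $p$ is inherited from $r$; your direct construction costs a few extra prefix counts but makes the role of the simplicity hypothesis more transparent.
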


\begin{proof}
Lemma~\ref{LPMlem:A} provides that the set $W = \SET{1,2,\ldots,j_{2}-1}$ as well as the set $X = \SET{1,2,\ldots,j_{1}-1}$ 
is a flat of $M$. By construction of $j_1$ and $j_2$ we have that $\rk(W) = \rk(E) -2$ and $\rk(X) = \rk(E) - 1$.
Thus $W$ is a coline of $M$ --- so {\em (i)} holds --- and $X$ is a copoint of $M$,
which follows from  and the construction of $j_{2}$
and $j_{1}$. Since $\left| X\BS W \right| = \left| \SET{j_2, j_2+1,\dots, j_1-1} \right| = j_1 - j_2$
we obtain statement {\em (ii)}.
Let $k\geq j_1$, and let $X_k = \SET{1,2,\ldots,j_{2}-1}\cup\{k\}$.
 Lemma~\ref{LPMlem:A} yields that $\mathrm{rk}(X_k)=\mathrm{rk}(E)-1$, thus $\cl(X_k)$ is a copoint on the Western coline $W$.
 It remains to show that $\cl(X_k) = X_k$, which implies that $X_k$ is indeed a simple copoint on $W$.
 We prove this fact by showing that for all $k' \geq j_1$, $\rk(X_k\cup\SET{k'}) = \rk(E)$
  by constructing a lattice path. Without loss of generality we may assume that $k < k'$.
  Since $M$ has no loops and no parallel edges, there is a lattice path $r=(r_{i})_{i=1}^{n}\in\mathrm{P}[p,q]$
  with $r_k = r_{k'} = \mathrm{N}$.
There is a lattice path $s=(s_{i})_{i=1}^{n}\in\mathrm{P}[p,q]$  that follows $q$ for
the first $j_{2}-1$ steps, then goes towards the East until it meets
$r$, and then goes on as $r$ does (Fig.~\ref{fig:B}b). The constructed
path $s$ yields that 
\begin{align*}
  \mathrm{rk}(X_k\cup\{k'\}) & \geq\left|\left(W\cup\{k,k'\}\right)\cap \SET{i\in E\mid s_i=\mathrm{N}} \right|
  \\ & =2+\left|W\cap \SET{i\in E\mid q_i =\mathrm{N}}\right|
\\& 
  =2+\mathrm{rk}(W) = 1 + \rk(X_k) = 1+ \rk(X_{k'}),
\end{align*}
where $X_k' = W\cup\SET{k'}$. Thus $k'\notin \cl(X_k)$ and $k\notin \cl(X_k')$. This
 completes the proof of statement {\em (iii)}.
\end{proof}

\begin{theorem}\label{thm:simplelpmqsc}
Let $M=(E,\Ical)$ be a strong lattice path matroid with $\rk_M(E) \geq 2$ such that 
$\left| E \right| = n$ and such that $M$ has neither a loop nor a pair of parallel edges.
Then either the Western
coline is quite simple, or the element $n\in E$ is a coloop, and in the latter case
there is either another
coloop or $\rk_M(E) \ge3$.\end{theorem}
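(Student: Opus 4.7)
The plan is to enumerate every copoint on the Western coline $W=\SET{1,\ldots,j_2-1}$ with the help of Theorem~\ref{thm:WesternColine}, pin down the exact combinatorial condition under which $W$ fails to be quite simple, and then handle that failure case structurally.

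First I would classify the copoints on $W$. Part~(iii) of Theorem~\ref{thm:WesternColine} gives a distinct simple copoint $W\cup\SET{k}$ for each $k\in\SET{j_1,\ldots,n}$. Part~(ii) gives the copoint $X=\SET{1,\ldots,j_1-1}$; for every $k\in\SET{j_2,\ldots,j_1-1}$ the set $W\cup\SET{k}$ lies in $X$ and has rank $\rk(E)-1=\rk(X)$, so its closure equals $X$. Since $E\setminus W=\SET{j_2,\ldots,n}$, this exhausts the copoints on $W$. If $j_1-j_2=1$, then $X=W\cup\SET{j_2}$ is itself simple and $W$ has no multiple copoint, hence is quite simple. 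If $j_1-j_2\ge 2$, then $X$ is the unique multiple copoint while there are $n-j_1+1$ simple copoints, so $W$ is quite simple iff $j_1<n$, iff $q_n=\mathrm{E}$. Hence the Western coline fails to be quite simple precisely when $q_n=\mathrm{N}$ and $q_{n-1}=\mathrm{E}$.

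In that failure case, at position $n-1$ the path $q$ has $\rk(E)-1$ N-steps; since any $r\in\mathrm{P}[p,q]$ satisfies $r\preceq q$ and has $\rk(E)$ N-steps in total, its $n$-th step must be $\mathrm{N}$. Thus $n$ lies in every base and is a coloop. If $\rk_M(E)\ge 3$ nothing more is needed.

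Otherwise $\rk_M(E)=2$. Then $W$ is a rank-$0$ flat, and looplessness forces $W=\emptyset$, i.e.\ $j_2=1$ and $q=(\mathrm{N},\mathrm{E},\ldots,\mathrm{E},\mathrm{N})$. If we had $p_1=\mathrm{E}$, Proposition~\ref{LPMprop:X}(iii) applied with $j=1$ and $k=2$ would make all four tallies equal to $0$, so $1$ and $2$ would be parallel, contradicting the hypothesis. Hence $p_1=\mathrm{N}$; this forces $r_1=\mathrm{N}$ for every $r\in\mathrm{P}[p,q]$, so $1$ is a coloop distinct from $n\ge 2$. The only mildly delicate step is recognising that every $k$ strictly between $j_2$ and $j_1$ collapses into the single copoint $X$; once the failure condition is isolated, the rest is a short combinatorial inspection.
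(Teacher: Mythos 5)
Your proof is correct and follows essentially the same route as the paper's: split on whether $j_1<n$, observe that $j_1=n$ forces $n$ to be a coloop, and resolve the rank-$2$ subcase via the no-parallel-edges hypothesis. You are somewhat more explicit than the paper in two places --- you actually verify that every copoint on $W$ other than the $W\cup\SET{k}$ with $k\ge j_1$ collapses into the single copoint $X=\SET{1,\ldots,j_1-1}$, and in the rank-$2$ subcase you exhibit the element $1$ as a concrete second coloop (forcing $p_1=\mathrm{N}$ via Proposition~\ref{LPMprop:X}(iii)) where the paper only argues by contradiction that the absence of a second coloop would create parallel elements; both refinements are sound and fill in details the paper leaves terse.
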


\begin{proof}
If $j_{1}\leq n-1$ as defined in Theorem~\ref{LPMthm:B}, $W=\SET{1,2,\ldots,j_{2}-1}$
has at most a single multiple copoint and at least two simple copoints, therefore
it is quite simple. Otherwise $j_{1}=n$ is a coloop. If there is another
coloop $e_{1}$, then $\SET{1,2,\ldots,n-1}\backslash\{e_{1}\}$ is a quite simple coline
with two simple copoints. If $n$ is the only coloop, the rank of $M$ is $2$, and
there is no other coloop, then this would imply that there are parallel edges --- a contradiction to the assumption that $M$ is a simple matroid.
\end{proof}

\section{Lattice Path Matroids are 3-Colorable}

\begin{corollary}\label{cor:LPMqscoline}
Every simple lattice path matroid $M=(E,\Ical)$ with $\rk_M(E)\geq 2$ has a quite simple
coline.\end{corollary}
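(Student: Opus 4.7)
The plan is to reduce to the strong case and then to argue by induction on the rank, with Theorem~\ref{thm:simplelpmqsc} supplying the heavy lifting at each step. By Definition~\ref{def:LPmatroid}, every lattice path matroid is isomorphic to a strong lattice path matroid via a relabelling of the ground set, and the notions of flat, rank, coline, and simple/multiple copoint are all invariants of matroid isomorphism; hence it suffices to prove the corollary under the additional assumption that $M$ is a simple \emph{strong} lattice path matroid with $\rk_M(E) \geq 2$, and the induction can be carried out on $\rk_M(E)$.

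For both the base case $\rk_M(E) = 2$ and the inductive step, I would apply Theorem~\ref{thm:simplelpmqsc} to $M$. The theorem delivers one of three conclusions: (a) the Western coline of $M$ is already quite simple, in which case we are finished; (b) $n = |E|$ is a coloop and $M$ admits a further coloop $e_1$; or (c) $n$ is the only coloop and $\rk_M(E) \geq 3$. In scenario (b), the set $W = E \BS \SET{n, e_1}$ has rank $\rk_M(E) - 2$, is closed (since neither coloop can lie in $\cl_M(W)$), and admits exactly the two copoints $W \cup \SET{n}$ and $W \cup \SET{e_1}$, both of them simple; hence $W$ is quite simple.

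The substantive case is (c). Here I would delete the coloop $n$ to form $M' = M \BS n$, which, by Theorem~\ref{thm:LPMclosedUnderStuff}, is still a lattice path matroid; deletion cannot create loops or parallel edges, so $M'$ is simple, and $\rk_{M'}(E \BS \SET{n}) = \rk_M(E) - 1 \geq 2$. The inductive hypothesis then yields a quite simple coline $W'$ of $M'$, which I would lift to $W = W' \cup \SET{n}$ in $M$. Because $n$ is a coloop, the flats of $M$ containing $n$ are precisely the sets $F \cup \SET{n}$ for flats $F$ of $M'$, with all corresponding ranks raised by one; in particular $W$ is a coline of $M$, and the assignment $F \mapsto F \cup \SET{n}$ restricts to a bijection between the copoints of $M'$ on $W'$ and those of $M$ on $W$. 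The identity $(F \cup \SET{n}) \BS W = F \BS W'$ then shows that this bijection preserves the simple/multiple classification, so $W$ inherits the quite-simple property from $W'$.

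The step I expect to be the main obstacle is the bookkeeping in scenario (c): one must verify precisely how closure, rank, and the cardinality $|Y \BS W|$ behave under coloop deletion and its inverse lifting, and also observe that the induction hypothesis, which concerns arbitrary simple lattice path matroids, really does apply to the minor $M \BS n$---even though $M \BS n$ need not itself be \emph{strong}. This is exactly why the isomorphism reduction in the opening paragraph is indispensable: without it the induction would stall after the very first deletion.
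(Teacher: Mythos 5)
Your proposal is correct and follows essentially the same route as the paper: reduce to a strong lattice path matroid, invoke Theorem~\ref{thm:simplelpmqsc}, and in the remaining case remove the coloop $n$ and lift a quite simple coline of the smaller matroid by adjoining $n$, using that flats, ranks and the simple/multiple classification of copoints transfer across this direct-sum decomposition. The only organizational differences are that the paper applies Theorem~\ref{thm:simplelpmqsc} a second time to the minor obtained by removing $n$ (observing that it is coloop-free, so its Western coline is quite simple) instead of running a full induction on the rank, and that it phrases the removal of $n$ as a contraction rather than a deletion --- which for a coloop yields the same matroid.
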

\begin{proof}
Without loss of generality, we may assume that $M$ is a strong lattice path matroid on $E=\SET{1,2,\ldots,n}$,
and we may use $j_1$ and $j_2$ as defined in Theorem~\ref{LPMthm:B}.
From Theorem~\ref{thm:simplelpmqsc}, we obtain the following:
If $j_1 < n$, the Western coline is quite simple. Otherwise, if $j_1=n$, then $n$ is a coloop.
If there is another coloop $e_{1}$, then $\SET{1,2,\ldots,n-1}\backslash\{e_{1}\}$ is a quite simple coline.
If there is no other coloop, then we have $\rk_M(E) \geq 3$,
 and the contraction $M' = M\contract E\BSET{n}$ is a strong lattice path matroid
without loops, without parallel edges, and
without coloops, such that $\rk_{M'}(E\BSET{n})=\rk_M(E) - 1 \geq 2$. Thus the corresponding $j_1' < n -1$ 
and the Western coline $W'$ of $M'$ is quite simple in $M'$ (Theorem~\ref{thm:simplelpmqsc}).
But then $\tilde W = W'\cup\SET{n}$ is a coline of $M$, and $\tilde X$ is a copoint on $\tilde W$ with respect to $M$
if and only if $X' = \tilde X \BSET{n}$ is a copoint on $W'$ with respect to $M'$. 
Since $\left| \tilde W \BS \tilde X \right| = \left| W' \BS X' \right|$, we obtain that $\tilde W$ is a quite simple coline of $M$.
\end{proof}

\begin{definition}[\cite{GoHoNe15}, Definition~2] Let $\mathcal{O}$ be an oriented
matroid. We say that $\mathcal{O}$ is \emph{generalized series-parallel},
if every non-trivial minor $\Ocal'$ of $\mathcal{O}$ with a simple underlying matroid $M(\Ocal')$ has a $\{0,\pm1\}$-valued
coflow which has exactly one or two nonzero-entries.\end{definition}

\begin{lemma}[\cite{GoHoNe15}, Lemma~5]\label{lem:quiteSimpleColineQGSP}
If an orientable matroid $M$ has a quite simple
coline, then every orientation $\mathcal{O}$ of $M$ has a $\{0,\pm1\}$-valued
coflow which has exactly one or two nonzero-entries.
\end{lemma}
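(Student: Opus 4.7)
The plan is to reduce to the rank-$2$ contraction $\Ocal / X$, where $X$ is a given quite simple coline of $M$, and exploit the cyclic structure of signed cocircuits of a rank-$2$ oriented matroid.

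First I would observe that the copoints of $M$ on $X$ correspond bijectively to the parallel classes of the simplification of $M/X$: simple copoints correspond to singleton classes, and multiple copoints correspond to classes of size at least $2$. Write $s$ and $m$ for the numbers of simple and multiple copoints on $X$; by hypothesis $s > m$. Since $\Ocal/X$ has rank $2$, the parallel classes $P_1, \ldots, P_{s+m}$ admit a natural cyclic order coming from the angular arrangement of the lines representing them. For each $i$ there is (up to overall sign) a unique signed cocircuit $D_i^{\ast}$ of $\Ocal/X$ whose zero set is $P_i$. By orienting these cocircuits consistently along the cyclic order, one can ensure that $D_i^{\ast}$ and $D_{i+1}^{\ast}$ agree in sign on every parallel class other than $P_i$ and $P_{i+1}$, since adjacency in the cyclic order means no other class lies strictly between them to produce a sign flip. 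Hence the difference $D_i^{\ast} - D_{i+1}^{\ast}$ is a $\{0,\pm 1\}$-valued vector supported exactly on $P_i \cup P_{i+1}$.

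Second, I would apply pigeonhole to the cyclic sequence $(P_1, \ldots, P_{s+m})$: if every singleton class were adjacent only to multi-element classes, the two types would alternate around the cycle and force $s \leq m$, contradicting $s > m$. So there is some index $i$ such that both $P_i$ and $P_{i+1}$ are singletons, say $P_i = \{e_i\}$ and $P_{i+1} = \{e_{i+1}\}$. The coflow $D_i^{\ast} - D_{i+1}^{\ast}$ then has support exactly $\{e_i, e_{i+1}\}$. Because the cocircuits of $\Ocal/X$ are precisely the signed cocircuits of $\Ocal$ supported off $\cl_M(X)$, this difference is also a coflow of $\Ocal$, giving a $\{0,\pm 1\}$-valued coflow of $\Ocal$ with exactly two nonzero entries.

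The main obstacle is the sign-compatibility claim in the first step: verifying that the $D_i^{\ast}$ can be oriented along the cyclic order so that differences of cyclically adjacent cocircuits are $\{0,\pm 1\}$-valued. This rests on the one-dimensional topological representation of rank-$2$ oriented matroids (equivalently, the fact that a rank-$2$ oriented matroid is determined by a signed cyclic sequence of its parallel classes), which is standard but should be invoked carefully rather than taken for granted.
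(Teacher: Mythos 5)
The paper does not actually prove this lemma: it is imported verbatim from \cite{GoHoNe15} and the text explicitly defers to that reference for the proof, so there is no in-paper argument to compare against. Your proposal is correct and is essentially the argument given in \cite{GoHoNe15}: contract the quite simple coline $X$ (a flat, so $\mathcal{O}/X$ is a loopless rank-$2$ oriented matroid whose parallel classes are the sets $Y\setminus X$ for copoints $Y$ on $X$), use the cyclic structure of rank-$2$ oriented matroids to find two cyclically adjacent singleton classes, and take the difference of the two corresponding cocircuits, which lifts to a coflow of $\mathcal{O}$ because $X$ is closed. Two small points: the counting step is better phrased as ``if no two singleton classes are cyclically adjacent, the map sending each singleton to its cyclic successor is an injection into the multi-element classes, so $s\leq m$'' (strict alternation need not occur); and the sign-compatibility fact you flag is unproblematic since every rank-$2$ oriented matroid is realizable by vectors in the plane, where the claim reduces to the observation that two linear functionals vanishing on angularly adjacent lines can be signed to agree on all directions outside the two narrow sectors between those lines. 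The degenerate case of exactly two parallel classes (where your difference, or indeed a single cocircuit, still yields one or two nonzero entries) is also worth a sentence, and accounts for the ``exactly one'' alternative in the statement.
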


 For a proof, see \cite{GoHoNe15}.

\begin{remark}\label{rem:GPSloRank}
  A simple matroid of rank $1$ has only one element, no circuit and a single cocircuit consisting of the sole element of the matroid;
  so every rank-$1$ oriented matroid is generalized series-parallel.
  Observe that every simple matroid $M=(E,\Ical)$ with $\rk_M(E) = 2$ is a lattice path matroid,
  as it is isomorphic to the
  strong lattice path matroid $M[p,q]$ where $p=(p_i)_{i=1}^{\left| E \right|}$ 
  with \[ p_i = \begin{cases} \mathrm{E} &\quad \text{if~} i < \left| E \right| - 2, \\
                              \mathrm{N} &\quad \text{otherwise,} \end{cases}\]
  and where $q=(q_i)_{i=1}^{\left| E \right|}$ 
  with \[ q_i = \begin{cases} \mathrm{N} &\quad \text{if~} i \leq 2,\\
                              \mathrm{E} &\quad \text{otherwise.} \end{cases}\]
  Therefore Lemma~\ref{lem:quiteSimpleColineQGSP} and Corollary~\ref{cor:LPMqscoline} yield that $\Ocal$ has a $\{0,\pm1$\}-valued
coflow which has exactly one or two nonzero-entries.
Consequently, every oriented matroid $\Ocal =(E,\Ccal,\Ccal^\ast)$ with
  $\rk_{M(\Ocal)}(E) \leq 2$ is generalized series-parallel.    
\end{remark}


\begin{corollary}\label{cor:LPMGSP}
All orientations of lattice path matroids are generalized series-parallel.
\end{corollary}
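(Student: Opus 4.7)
The plan is to chain three results already established: lattice path matroids are closed under minors (Theorem~\ref{thm:LPMclosedUnderStuff}); every simple lattice path matroid of rank at least two has a quite simple coline (Corollary~\ref{cor:LPMqscoline}); and from a quite simple coline in the underlying matroid one extracts, on any orientation, a $\{0,\pm 1\}$-valued coflow with exactly one or two nonzero entries (Lemma~\ref{lem:quiteSimpleColineQGSP}).

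First I would unfold the definition of generalized series-parallel: to show that an orientation $\Ocal$ of a lattice path matroid $M$ is generalized series-parallel, it suffices to exhibit, for every non-trivial minor $\Ocal'$ of $\Ocal$ whose underlying matroid $M(\Ocal')$ is simple, a coflow of $\Ocal'$ of the required shape.

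Next I would fix such a minor $\Ocal'$ and analyse $M(\Ocal')$. Since oriented-matroid minors are compatible with matroid minors on the underlying structure, $M(\Ocal')$ is a minor of $M$, so Theorem~\ref{thm:LPMclosedUnderStuff} yields that $M(\Ocal')$ is itself a lattice path matroid; by hypothesis it is moreover simple. I would then split by the rank of $M(\Ocal')$: if that rank is at most two, Remark~\ref{rem:GPSloRank} directly supplies the required coflow on $\Ocal'$; if it is at least two, the combination of Corollary~\ref{cor:LPMqscoline} with Lemma~\ref{lem:quiteSimpleColineQGSP} produces it. In either case the defining coflow condition holds, so $\Ocal$ is generalized series-parallel.

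I do not expect any genuine obstacle in this argument, since all the heavy lifting has been carried out in the previous two sections. The only small point to verify is that simplicity and the lattice-path structure transfer to $M(\Ocal')$, which is exactly what the compatibility of oriented-matroid minors with underlying-matroid minors, combined with Theorem~\ref{thm:LPMclosedUnderStuff}, provides.
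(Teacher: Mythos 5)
Your proposal is correct and follows exactly the route the paper intends: the paper's proof is a bare citation of Lemma~\ref{lem:quiteSimpleColineQGSP}, Remark~\ref{rem:GPSloRank}, Theorem~\ref{thm:LPMclosedUnderStuff} and Corollary~\ref{cor:LPMqscoline}, and your write-up simply spells out how those four results combine (minor-closedness gives that each simple underlying minor is again a lattice path matroid, then a rank split hands the low-rank case to the Remark and the rank-$\geq 2$ case to the coline results).
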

\begin{proof}
Lemma~\ref{lem:quiteSimpleColineQGSP}, Remark~\ref{rem:GPSloRank}, Theorem~\ref{thm:LPMclosedUnderStuff} and Corollary~\ref{cor:LPMqscoline}.
\end{proof}

\begin{theorem}[\cite{GoHoNe15}, Theorem 3]
\label{thm:GPS3col} Let $\Ocal=(E,\Ccal,\Ccal^\ast)$ be a generalized series-parallel oriented
matroid such that $M(\Ocal)$ has no loops.
Then there is a nowhere-zero coflow $F\in \Z.\Ccal^\ast$ such that $\left| F(e) \right| < 3$ for all $e\in E$.
Thus $\chi(\Ocal) \leq 3$.
\end{theorem}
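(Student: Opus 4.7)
The plan is to prove Theorem~\ref{thm:GPS3col} by strong induction on $|E|$. The base cases $|E|\le 1$ are immediate: if $|E|=0$ there is nothing to do, and if $|E|=1$ the lone (loopless) element is a coloop, so $F(e):=1$ is a nowhere-zero coflow with $|F|\le 2$. For the inductive step I would make three successive reductions, each exploiting that the class of GSP oriented matroids is minor-closed (immediate from the definition).

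First, if $\Ocal$ has a coloop $e$, then $\Ocal\setminus e$ is GSP and loopless, so the induction hypothesis gives a nowhere-zero $\{\pm 1,\pm 2\}$-valued coflow $F'$ on $E\setminus\{e\}$, and extending by $F(e):=1$ (valid since $\{e\}$ itself is a cocircuit) produces the required coflow on $\Ocal$. Second, if $\Ocal$ has parallel elements $e,f$, then $\Ocal\setminus f$ is GSP and loopless, the induction hypothesis supplies a nowhere-zero small coflow $F'$, and every signed cocircuit of $\Ocal$ contains $e,f$ with uniformly linked signs, so $F(e)=\pm F(f)$ for a fixed sign in every coflow; extending $F'$ by $F(f):=\pm F'(e)$ with that sign yields the required coflow on $\Ocal$.

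After these two reductions $\Ocal$ is simple and coloop-free. Applying the GSP hypothesis directly to $\Ocal$ yields a $\{0,\pm 1\}$-valued coflow $F_0$ whose support $S$ has size $1$ or $2$. Size $1$ would produce a coloop, so $|S|=2$, say $S=\{e,f\}$, and $S$ is then a $2$-element cocircuit. Contract $f$: simplicity of $\Ocal$ means no element is parallel to $f$, so $\Ocal/f$ creates no loops and is GSP with $|E|-1$ elements. By the induction hypothesis $\Ocal/f$ carries a nowhere-zero coflow $F'$ with $|F'|\le 2$. Since the cocircuits of $\Ocal/f$ are precisely the cocircuits of $\Ocal$ disjoint from $f$, the coflow $F'$ lifts canonically to a coflow $\tilde F$ on $\Ocal$ with $\tilde F(f)=0$ and $\tilde F(g)=F'(g)$ for $g\ne f$. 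Setting $F:=\tilde F+\alpha F_0$ for $\alpha\in\{\pm 1\}$ alters only the $e$ and $f$ coordinates, giving $F(f)=\alpha F_0(f)=\pm 1$ and $F(e)=F'(e)+\alpha F_0(e)$.

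The sole step needing verification is the choice of $\alpha$: for every $F'(e)\in\{-2,-1,1,2\}$ and every sign of $F_0(e)$, exactly one of the two choices $\alpha=\pm 1$ keeps $F'(e)+\alpha F_0(e)$ inside $\{-2,-1,1,2\}$ (concretely, $\pm 1$ is sent to $\pm 2$ and $\pm 2$ to $\pm 1$). With this choice $F$ is a nowhere-zero coflow on $\Ocal$ with values in $\{-2,-1,1,2\}$, so $\chi(\Ocal)\le 3$ by the definition of the oriented-matroid chromatic number. The main obstacle, in my view, is less the arithmetic than the structural bookkeeping: at each reduction one must confirm that the resulting minor is loopless and GSP---in particular, that contracting one endpoint of a $2$-element cocircuit in a simple matroid creates no new loops.
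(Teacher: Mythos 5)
The paper itself offers no proof of Theorem~\ref{thm:GPS3col}; it is imported verbatim from \cite{GoHoNe15} with the remark ``for a proof, see \cite{GoHoNe15}''. So your argument has to stand on its own. Its architecture --- induction on $\left| E \right|$, reductions for coloops and for parallel elements, then applying the GSP property to the simple, coloop-free case and adding the small-support $\{0,\pm1\}$-valued coflow $F_0$ to the zero-extension of an inductively obtained coflow of a one-element contraction --- is sound, and the sign-choice arithmetic at the end is correct.

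One step, however, is genuinely false as written: the claim that a $\{0,\pm1\}$-valued coflow with exactly one nonzero entry forces that element to be a coloop. That conclusion would follow from integer orthogonality of circuits and cocircuits, which holds for regular matroids only; in a general oriented matroid the support of an element of $\Z.\Ccal^\ast$ need not contain a cocircuit. Concretely, orient $U_{2,4}$ by four collinear points at positions $0,1,2,3$; among its signed cocircuits are $D_1=(0,+,+,+)$, $D_2=(-,0,+,+)$, $D_3=(-,-,0,+)$, and $D_1-D_2+D_3=(0,0,0,1)$ is a coflow supported on the single element $4$, which is not a coloop. Since every rank-$2$ oriented matroid is GSP (cf.\ Remark~\ref{rem:GPSloRank}), the case $\left| S \right|=1$ with a non-coloop support element cannot be argued away. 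Fortunately it is the \emph{easier} case: contract the unique support element $e$ (no loops arise, by simplicity), lift the inductive coflow by zero and add $F_0$; the new value at $e$ is $\pm 1$ and no sign choice is needed. With that case restored --- and with the standard but unstated fact that every signed cocircuit of $\Ocal\setminus f$ is the restriction of a signed cocircuit of $\Ocal$, which is what legitimizes ``extending'' $F'$ across a deleted parallel element --- your proof goes through.
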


 For a proof, see \cite{GoHoNe15}.

\begin{corollary}
  Let $\Ocal$ be an oriented matroid such that $M(\Ocal)$ is a lattice path matroid without loops.
  Then \( \chi(\Ocal) \leq 3 \).
\end{corollary}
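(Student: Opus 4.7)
The statement is essentially the conclusion the whole paper has been building toward, and at this point nearly all of the work has already been done. The plan is simply to combine the two main ingredients that precede it: Corollary~\ref{cor:LPMGSP}, which promotes the structural result about quite simple colines into the statement that every orientation of a lattice path matroid is generalized series-parallel, and Theorem~\ref{thm:GPS3col}, which supplies the actual chromatic bound for loopless GSP oriented matroids.

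First I would invoke Corollary~\ref{cor:LPMGSP} applied to $\Ocal$: since the underlying matroid $M(\Ocal)$ is assumed to be a lattice path matroid, the corollary immediately yields that $\Ocal$ is generalized series-parallel. Then, using the hypothesis that $M(\Ocal)$ has no loops, I would invoke Theorem~\ref{thm:GPS3col} on $\Ocal$; it produces a nowhere-zero coflow $F \in \Z.\Ccal^\ast$ with $|F(e)| < 3$ for every $e \in E$, and by definition of the chromatic number of an oriented matroid the existence of such a coflow gives $\chi(\Ocal) \leq 3$.

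There is essentially no obstacle at this stage: the substantive content — establishing a quite simple coline in every simple lattice path matroid of rank at least two (Theorem~\ref{thm:WesternColine}, Theorem~\ref{thm:simplelpmqsc}, Corollary~\ref{cor:LPMqscoline}), lifting this via minor closure and the low-rank handling of Remark~\ref{rem:GPSloRank} to the GSP property (Corollary~\ref{cor:LPMGSP}), and the Goddyn--Hochstättler--Nešetřil bound (Theorem~\ref{thm:GPS3col}) — has already been done. The proof of the final corollary therefore reduces to citing these two results in sequence, and I would write it as a single short sentence citing Corollary~\ref{cor:LPMGSP} and Theorem~\ref{thm:GPS3col}.
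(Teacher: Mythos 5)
Your proposal is correct and matches the paper's proof exactly: the paper also just cites Corollary~\ref{cor:LPMGSP} to conclude that $\Ocal$ is generalized series-parallel and then applies Theorem~\ref{thm:GPS3col} using the loopless hypothesis. Nothing further is needed.
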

\begin{proof}
Theorem~\ref{thm:GPS3col} and Corollary~\ref{cor:LPMGSP}.
\end{proof}


\bigskip
\footnotesize
\noindent\textit{Acknowledgments.}
This research was partly supported by a scholarship granted by the FernUniversität in Hagen.


\bibliographystyle{plain}

\bibliography{references.bib}

\end{document}